\theoremstyle{definition}
\newtheorem{thm}{Theorem}
\numberwithin{thm}{section}
\newtheorem{prop}[thm]{Proposition}
\newtheorem{lem}[thm]{Lemma}
\newcommand{\abs}[1]{| #1 |}
\newcommand{\trm}[1]{\textrm{#1}}
\newcommand{\al}{\alpha}
\newcommand{\tit}[1]{\textit{#1}}
\def\F{\mathbb{F}}
\newcommand{\tbf}[1]{\textbf{#1}}
\newcommand{\lam}{\lambda}
\newcommand{\ev}{\textrm{ev}}
\newcommand{\non}{\noindent}
\newcommand{\diag}{\textrm{diag}}
\begin{document}

\title[Matrix Waring Problem]{Matrix Waring problem}

\email{kishorekrishna@iittp.ac.in}

\address{Krishna Kishore,
Department of Mathematics,
Indian Institute of Technology Tirupati
Tirupati, AP
India 517506}
\author{Krishna Kishore}

\begin{abstract}
We prove that for all integers $k \geq 1$, there exists a constant $C_k$ depending only on $k$, such that for all $q > C_k$,  and for $n = 1, 2$ every matrix in $M_n(\F_q)$ is a sum of two $k$th powers and for all $n \geq 3$ every matrix in $M_n(\F_q)$ is a sum of \textit{at most three} $k$th powers.
\end{abstract}


\maketitle

\section{Introduction}
Let $n$ be a positive integer, $R$ a commutative ring with unity, and $M_n(R)$ the ring of $n \times n$ matrices over $R$. The Matrix Waring Problem over $R$ is, essentially, to represent an arbitrary matrix in $M_n(R)$ as a sum of \textit{two} $k$th powers for any positive integer $k \geq 1$. This goal of this article is to answer this question in the case where $R$ is a finite field $\F_q$, with $q$ sufficiently large; see Theorem \ref{intro_main_thm_1} and Theorem \ref{intro_main_thm_2}. 

The classical Waring problem states that every natural number is a sum of $4$ squares, of $9$ cubes, of $19$ biquadrates, etc. In other words,  for all integers $k \geq 1$, there exists a smallest positive integer $g(k)$ such that every natural number is a sum of at most $g(k)$ number of $k$th powers (of natural numbers); in particular, $g(2) = 4$, $g(3) = 9$, $g(4) = 19$. The existence of $g(k)$ for arbitrary $k$ was proved by Hilbert, and later on explicit formulae for $g(k)$ have been found for all but finitey many values of $k$; for a survey of results about Waring's problem and its variants see \cite{VW}. In this context, a variant of the classical Waring problem may be posed, namely, for all $k \geq 2$, one may ask whether there exists a smallest bound $G(k)$ such that every \tit{sufficiently large} natural number is a sum of at most $G(k)$ number of $k$th powers. Unlike the case of $g(k)$, the values of $G(k)$ have been found only when $k = 2$ and $k =4$.

Waring's problem makes sense over arbitrary rings, and in particular for matrix rings over finite fields. Let $k$ and $n$ be positive integers and let $G(k,n)$ be the smallest positive integer such that \tit{for all sufficiently large $q$} every matrix in $M_n(\F_q)$ can be written as a sum of $G(k,n)$ number of $k$th powers (of matrices in $M_n(\F_q)$). It is well-known, due to Small \cite{Sm}, that $G(k,1) = 2$ for all $k \geq 1$; see Proposition \ref{n1prop}.  Including this result of Small, we prove that, 

\begin{center}
For all $k \geq 2$, $G(k,1)= G(k,2) = 2$ and 
$G(k,n) \leq 3$ for all $n \geq 3$.
\end{center}

\non In other words, we prove the following results. 

\begin{thm}\label{intro_main_thm_1}
For all integers $k \geq 1$, there exists a constant $C_k$ depending only on $k$, such that for all $q > C_k$, every matrix in $M_2(\F_q)$ is a sum of two $k$th powers.
\end{thm}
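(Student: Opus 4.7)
My plan is to split $M_2(\F_q)$ into its similarity classes and exploit the fact that ``being a sum of two $k$th powers'' is invariant under $M \mapsto PMP^{-1}$. Every matrix in $M_2(\F_q)$ is similar to exactly one of: (i) a scalar $\lambda I$; (ii) a diagonal $\diag(\lambda_1, \lambda_2)$ with distinct $\lambda_1, \lambda_2 \in \F_q$; (iii) the companion matrix of an irreducible quadratic over $\F_q$; or (iv) a Jordan block $J_\lambda := \lambda I + E_{12}$, where $E_{12}$ denotes the matrix with a $1$ in position $(1,2)$ and zeros elsewhere.

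Cases (i) and (ii) reduce instantly to the $n=1$ theorem of Small (Proposition \ref{n1prop}): write $\lambda = a^k + b^k$ in $\F_q$ (resp.\ each $\lambda_i = a_i^k + b_i^k$) and assemble the corresponding scalar or diagonal matrices. For case (iii), embed $\F_{q^2}$ into $M_2(\F_q)$ via the regular representation $\iota$; applying Proposition \ref{n1prop} to the field $\F_{q^2}$ (valid as soon as $q^2 > C_k$, which follows from $q > C_k$), write $\alpha = x^k + y^k$ for the eigenvalue $\alpha \in \F_{q^2}\setminus\F_q$ of the target matrix and transport the decomposition along $\iota$. The matrix $\iota(\alpha) = \iota(x)^k + \iota(y)^k$ is similar to the target because both have the same irreducible characteristic polynomial, and conjugating by the appropriate $Q \in GL_2(\F_q)$ transfers the decomposition to the target itself.

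The genuine difficulty lies in case (iv). My approach: pick $A = \diag(\alpha_1, \alpha_2)$ with $\alpha_1^k \neq \alpha_2^k$ such that each $\mu_i := \lambda - \alpha_i^k$ is a $k$th power in $\F_q$, say $\mu_i = \beta_i^k$. Setting $P = \begin{pmatrix}1 & 1 \\ 0 & \mu_2 - \mu_1\end{pmatrix}$ and $B = P\,\diag(\beta_1,\beta_2)\,P^{-1}$, a direct computation yields $B^k = \begin{pmatrix}\mu_1 & 1 \\ 0 & \mu_2\end{pmatrix}$, and hence $A^k + B^k = J_\lambda$. The existence of such $\alpha_1, \alpha_2$ for $q$ large in $k$ follows from a Weil-type estimate: reducing to $\gcd(k,q)=1$ via the Frobenius bijection, the Fermat-type equation $x^k + y^k = \lambda$ has $q + O_k(\sqrt q)$ solutions in $\F_q^2$ (for $\lambda \neq 0$), and each value $\alpha^k$ arises from at most $k^2$ such pairs, so the set $\{\alpha^k : \lambda - \alpha^k \text{ is a } k\text{th power in } \F_q\}$ has $\gg_k q$ elements, in particular more than $2$, once $q$ is sufficiently large in $k$.

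The most delicate sub-case is $\lambda = 0$ together with $-1 \notin (\F_q^*)^k$: here the diagonal ansatz collapses because the only admissible $\alpha$ is $\alpha = 0$. I would handle this sub-case separately by taking $A$ and $B$ in two distinct $GL_2(\F_q)$-conjugate embeddings of $\F_{q^2}$ inside $M_2(\F_q)$; writing the sum $A^k + B^k$ coordinate-wise in terms of the pair $(\alpha^k, \beta^k) \in \F_{q^2} \times \F_{q^2}$, one obtains a system whose solvability reduces to a character-sum count on $\F_{q^2}$ (where the analogous obstruction disappears for generic $r$ in the choice of the quadratic extension, since one has two genuinely independent parameters rather than one). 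This should produce a nonzero nilpotent sum similar to $E_{12} = J_0$, completing the proof.
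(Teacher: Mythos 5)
Your case division and your treatment of cases (i)--(iii) coincide with the paper's: matrices diagonalizable over $\F_q$ reduce to Small's theorem entrywise, and the irreducible case goes through $\F_q[A]\cong\F_{q^2}$ exactly as in the paper. In case (iv) your ansatz is computationally sound as far as it goes: with $B=P\,\diag(\beta_1,\beta_2)\,P^{-1}$ one indeed gets $B^k=\begin{bmatrix}\mu_1&1\\0&\mu_2\end{bmatrix}$, and the Weil count does supply $\alpha_1,\alpha_2$ with $\alpha_1^k\neq\alpha_2^k$ and $\lam-\alpha_i^k$ a $k$th power whenever $\lam\neq 0$, or when $\lam=0$ and $-1$ is a $k$th power. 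But the sub-case $\lam=0$ with $-1\notin(\F_q^*)^k$ (e.g.\ $k=2$, $q\equiv 3\pmod 4$) is a genuine gap. You correctly diagnose that your ansatz collapses there, but the proposed repair via two conjugate embeddings of $\F_{q^2}$ is only a sketch: you have not written down the resulting system, established the irreducibility needed for a character-sum or Weil estimate, or shown that the sum you obtain is a \emph{nonzero} nilpotent rather than $0$. As written, the theorem is unproved in exactly this sub-case, and ``this should produce'' is carrying the whole weight.

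The structural reason you get stuck is that your two summands are simultaneously upper triangular, so their diagonal entries are forced to be $k$th powers of elements of $\F_q$ summing to $\lam$ in each slot, and for $\lam=0$ that is precisely the $-1$ obstruction. The paper's Proposition \ref{n2unipotentcase} avoids this by giving the summands opposite \emph{nonzero} lower-left entries: it takes $B=\begin{bmatrix}a&b\\c&d\end{bmatrix}$ and $C=\begin{bmatrix}\lam-a&b'\\-c&\lam-d\end{bmatrix}$ with $c\neq 0$, chooses the traces $a+d$ and $2\lam-(a+d)$ both nonzero, and then uses Proposition \ref{abs_irr} only to make each characteristic polynomial $X^2-(\mathrm{trace})X+\det$ split with two distinct roots that are $k$th powers; each summand is then itself a $k$th power, the lower-left entries cancel, and the sum is $\begin{bmatrix}\lam&b+b'\\0&\lam\end{bmatrix}$ with $b+b'\neq 0$. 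Since only the two traces need to be nonzero sums of distinct $k$th powers, $\lam=0$ causes no difficulty. To complete your proof you should either adopt this device or actually carry out the $\F_{q^2}$ argument you allude to; until one of these is done, the argument is incomplete.
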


\begin{thm}\label{intro_main_thm_2}
For all integers $k  \geq 1$, there exists a constant $C_k$ depending only on $k$, such that for all $q > C_k$, and for all $n \geq 3$, every matrix in $M_n(\F_q)$ is a sum of \textit{at most three} $k$th powers.
\end{thm}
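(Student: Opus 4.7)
The plan is to reduce to the established cases $n = 1$ (Proposition \ref{n1prop}) and $n = 2$ (Theorem \ref{intro_main_thm_1}) via the rational canonical form. First, the set $P_k := \{A^k : A \in M_n(\F_q)\}$ of $k$th powers is closed under conjugation, since $PA^kP^{-1} = (PAP^{-1})^k$ for any $P \in GL_n(\F_q)$, so the sumset $P_k + P_k + P_k$ is conjugation-invariant. It therefore suffices to prove the theorem for any chosen representative of each similarity class, and I would put $M$ into rational canonical form $M \sim C_{f_1} \oplus \cdots \oplus C_{f_r}$, where the $C_{f_i}$ are the companion matrices of the invariant factors of $M$.

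If each $C_{f_i}$ can be written as a sum $A^{(1)}_i + A^{(2)}_i + A^{(3)}_i$ with each $A^{(j)}_i$ a $k$th power in $M_{\deg f_i}(\F_q)$, then assembling the summands block-diagonally expresses $M$ as a sum of three $k$th powers in $M_n(\F_q)$, because $\bigoplus_i A^{(j)}_i = \bigl(\bigoplus_i B^{(j)}_i\bigr)^k$ whenever $A^{(j)}_i = (B^{(j)}_i)^k$, with the convention that a zero block contributes a zero $k$th-power block. Thus it suffices to show that each companion matrix $C_f \in M_d(\F_q)$ is a sum of at most three $k$th powers. For $d \in \{1, 2\}$ this is immediate from the base cases. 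For $d \geq 3$, the strategy is to write $C_f = A + D$, where $D \in M_d(\F_q)$ is diagonal and $A := C_f - D$ is arranged, via the $d$ free parameters on the diagonal of $D$, to have characteristic polynomial of a form (for instance $g(x^k)$) that forces $A$ to be conjugate to an explicit $k$th power in $M_d(\F_q)$. Then the diagonal matrix $D$ is split as $D_1 + D_2$ with each diagonal entry of $D_i$ a $k$th power of $\F_q$, by Proposition \ref{n1prop} applied entrywise, giving $M = A + D_1 + D_2$ as a sum of three $k$th powers.

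The main obstacle will be the existence of a valid $D$. One must show that for $q$ exceeding a constant $C_k$ depending only on $k$, there is some diagonal $D \in M_d(\F_q)$ for which $C_f - D$ lies in the prescribed class of ``manifest'' $k$th powers. This locus is cut out on the $d$-dimensional affine space of diagonal matrices by polynomial conditions whose degrees are bounded in terms of $k$, and I would establish the existence of an $\F_q$-point by a Lang--Weil or Chevalley--Warning type estimate, which is precisely where the hypothesis $q > C_k$ enters. A secondary subtlety is to arrange that the $k$th root of $A$ actually lies in $M_d(\F_q)$ rather than in an algebraic extension; this should be ensured by selecting the free parameters so that the Galois orbit structure of the roots of the chosen auxiliary polynomial (such as the polynomial $g$ above) is compatible with $\F_q$.
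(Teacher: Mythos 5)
Your reduction to companion blocks via the rational canonical form is correct and is the same first step as the paper's. The genuine gap is in your treatment of a companion block $C_f$ of size $d \geq 3$: you propose to choose a diagonal $D$ so that $C_f - D$ lies in some class of ``manifest'' $k$th powers, and to produce such a $D$ by a Lang--Weil or Chevalley--Warning estimate on a locus inside the $d$-dimensional space of diagonal matrices. As sketched, this cannot yield a constant $C_k$ depending only on $k$. The conditions you would impose involve the characteristic polynomial of a $d \times d$ matrix, whose coefficients are polynomials of degree up to $d$ (not $k$) in the entries of $D$, and any point-counting estimate for a locus in $d$ variables cut out by equations of degree comparable to $d$ requires $q$ large relative to $d$. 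Since the theorem demands a single $C_k$ valid for every $n \geq 3$ simultaneously, a fixed $q$ just above $C_k$ must handle companion blocks of unbounded size $d \leq n$, which is exactly where your estimate degenerates. In addition, you never pin down the class of manifest $k$th powers: requiring the characteristic polynomial of the nonderogatory matrix $C_f - D$ to be of the form $g(x^k)$ already forces $k \mid d$, and you give no argument that the relevant locus is nonempty and absolutely irreducible, which the Lang--Weil input needs.

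The paper sidesteps all of this by using Botha's explicit decomposition of a companion matrix into a sum of two diagonalizable matrices built from $2 \times 2$ blocks, whose spectra are contained in a set of boundedly many field elements ($0$, $\pm x$, $\pm 1$, $a_{n-1}$, together with a splitting $a_{n-1} = b_{n-1} + c_{n-1}$ into $k$th powers). Choosing the free parameter $x$ to be a nonzero $k$th power makes each summand similar to a diagonal matrix whose entries are $k$th powers or sums of two $k$th powers, and a diagonalizable matrix with all eigenvalues $k$th powers in $\F_q$ is itself a $k$th power. Every arithmetic condition then lives in one or two variables with degree controlled by $k$ alone, which is precisely what makes $C_k$ independent of $n$. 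To salvage your route you would need to replace the $d$-variable existence problem by one whose degree and number of variables are bounded in terms of $k$ only; as written, the key existence step is both unspecified and non-uniform in $n$.
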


\non We note here that our results go beyond an eariler result due to Demiroglu published in this journal \cite{De}. While Demiroglu obtained the result with infinitely many exceptions, we have the same result with only finitely many exceptions. Furthermore, the methods used to obtained these results in this article are distinct from the ones used by Demiroglu. These results are consequences of Weil's result on the number of solutions in $\F_q$ of polynomials defined over $\F_q$;  see Theorem \ref{weil_thm}. \\

\non The paper is organized as follows. In \S \ref{Weil_results} we obtain consequences of Weil's results on the number of solutions to equations over finite fields. In \S \ref{n2_section} we prove Theorem \ref{intro_main_thm_1}. In \S \ref{n3_section} we use results of \S \ref{n2_section} to prove Theorem \ref{intro_main_thm_2}. \\

\section*{Notation}
The diagonal matrix with entries $\lam_1, \ldots, \lam_n$ along the diagonal is denoted by $\diag(\lam_1, \ldots, \lam_n)$. \\

\non The direct sum $A \oplus B$ of two square matrices $A$ of size $r$ and $B$ of size $s$ is the square matrix of size $r+ s$ with $A$ and $B$ lying along the diagonal; that is $ A \oplus B$ is in the block diagonal form.\\

\non $\F_q$ always denotes the finite field with $q$ elements.

\section{Equations over Finite Fields}\label{Weil_results}

 First, we need some results from algebraic geometry over finite fields. Recall, a polynomial over $\F_q$ is absolutely irreducible if it remains irreducible over the algebraic closure of $\F_q$.

\begin{lem}\label{abs_irr_lem}
Consider the polynomial $Y^d = f(X)$ in $\F_q[X,Y]$, where $d \geq 1$. The following conditions are equivalent:
\begin{enumerate}
  \item
  $Y^d - f(X)$ is absolutely irreducible.
  \item
  $Y^d - c f(X)$ is absolutely irreducible for every 
  nonzero $c$ in $\F_q$.
  \item
  Let $f(X) = a(X - \al_1)^{d_1} \cdots (X - \al_s)^{d_s}$ be the factorization of $f$ in the algebraic closure of $\F_q$ with $\al_i \neq \al_j$ for $i \neq j$ and $d_i$ are nonnegative. Then $(d, d_1, d_2, \cdots , d_s) = 1$.

\end{enumerate}
\end{lem}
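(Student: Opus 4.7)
The plan is to establish $(1)\Leftrightarrow(2)$ by a change of variables, $(1)\Rightarrow(3)$ by an explicit factorization under the contrapositive hypothesis, and $(3)\Rightarrow(1)$ by a valuation-theoretic argument over the function field $F:=\overline{\F_q}(X)$; these three implications close the circuit.

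For $(1)\Leftrightarrow(2)$, given any nonzero $c\in\F_q$, choose $\gam\in\overline{\F_q}$ with $\gam^d=c$, which exists because $\overline{\F_q}$ is algebraically closed. The map $Y\mapsto\gam Y$ is an $\overline{\F_q}$-algebra automorphism of $\overline{\F_q}[X,Y]$ sending $Y^d - f(X)$ to $\gam^d Y^d - f(X) = c\bigl(Y^d - c^{-1}f(X)\bigr)$; automorphisms preserve absolute irreducibility, and as $c$ ranges over $\F_q^\times$ so does $c^{-1}$, giving the equivalence. For the contrapositive of $(1)\Rightarrow(3)$, assume $e:=\gcd(d,d_1,\ldots,d_s)>1$ and pick a prime $\ell\mid e$. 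Writing $d=\ell d'$, $d_i=\ell d_i'$, and choosing $b\in\overline{\F_q}$ with $b^\ell=a$, we get
\[
Y^d - f(X) \;=\; (Y^{d'})^\ell - \Bigl(b\prod_i(X-\al_i)^{d_i'}\Bigr)^\ell.
\]
The identity $U^\ell - V^\ell = \prod_{\zeta^\ell=1}(U-\zeta V)$ when $\ell$ is coprime to the characteristic, and $U^\ell-V^\ell=(U-V)^\ell$ in characteristic $\ell$, then exhibits a nontrivial factorization of $Y^d - f(X)$ in $\overline{\F_q}[X,Y]$, contradicting absolute irreducibility.

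The main work is $(3)\Rightarrow(1)$. Let $\beta$ be a root of $Y^d - f(X)$ in an algebraic closure of $F$ and put $n=[F(\beta):F]$; by Gauss's lemma applied to the monic polynomial $Y^d-f(X)\in\overline{\F_q}[X][Y]$, absolute irreducibility is equivalent to $n=d$. For each $i$, let $v_i$ be the discrete valuation on $F$ with $v_i(X-\al_i)=1$, so $v_i(f)=d_i$. For any extension $w$ of $v_i$ to $F(\beta)$ normalized to take integer values, write $w|_F = e(w/v_i)\,v_i$, whence $d\cdot w(\beta)=w(f)=e(w/v_i)\,d_i$; integrality of $w(\beta)$ forces $d/\gcd(d,d_i)$ to divide $e(w/v_i)$. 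Since the residue field of $F$ at $v_i$ is the algebraically closed field $\overline{\F_q}$, every residue degree $f(w/v_i)$ equals $1$, so the fundamental equality gives $n=\sum_{w\mid v_i}e(w/v_i)$, which is then divisible by $d/\gcd(d,d_i)$. Taking the least common multiple over $i$ and using the hypothesis $\gcd(d,d_1,\ldots,d_s)=1$ to see that this LCM equals $d$, one gets $d\mid n$; combined with $n\le d$, this forces $n=d$. The main obstacle I foresee is the justification of the fundamental equality $\sum_w e(w/v_i)f(w/v_i)=n$ when the residue characteristic $p$ divides $d$, since $F(\beta)/F$ can then be inseparable; I would handle this by splitting the extension through the tower $F\subset F(\beta^{p^r})\subset F(\beta)$ with $d=p^r m$, $\gcd(m,p)=1$, reducing to the separable extension $F(\beta^{p^r})/F$ and the purely inseparable extension $F(\beta)/F(\beta^{p^r})$, both of which satisfy the required equality by direct computation.
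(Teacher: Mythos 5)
Your proof is correct; note that the paper does not actually prove this lemma but simply cites [Page 11, \cite{Sc}], so any complete argument is necessarily a different route from the paper's. Your $(1)\Leftrightarrow(2)$ via $Y\mapsto\gamma Y$ with $\gamma^d=c$, and your contrapositive of $(1)\Rightarrow(3)$ via $U^\ell-V^\ell$ (with the separate case $\ell=\operatorname{char}\F_q$), are clean and complete; for $(2)$ you could even skip the substitution, since condition $(3)$ is visibly unchanged when $f$ is replaced by $cf$. The valuation-theoretic proof of $(3)\Rightarrow(1)$ also checks out: the reduction to $[F(\beta):F]=d$ via Gauss's lemma, the divisibility $d/\gcd(d,d_i)\mid e(w/v_i)$, and the identity $\operatorname{lcm}_i\bigl(d/\gcd(d,d_i)\bigr)=d$ under the hypothesis $\gcd(d,d_1,\ldots,d_s)=1$ are all correct (the degenerate case $s=0$ forces $d=1$ and is trivial, and indices with $d_i=0$ contribute nothing, consistent with the statement's ``nonnegative'' $d_i$). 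The one point you rightly flag --- the fundamental equality $\sum_{w\mid v}e(w/v)f(w/v)=[E:F]$ when $E/F$ may be inseparable --- is a standard fact for function fields over a perfect constant field (the integral closure of a valuation ring of $\overline{\F_q}(X)$ in any finite extension is a finitely generated module, e.g.\ Stichtenoth, Theorem III.1.11), and your separable/purely-inseparable tower also works since all residue degrees over $\overline{\F_q}$ are $1$. A shorter alternative for $(3)\Rightarrow(1)$ is the Vahlen--Capelli criterion: $Y^d-f$ is irreducible over $K=\overline{\F_q}(X)$ iff $f\notin K^\ell$ for every prime $\ell\mid d$ and $f\notin -4K^4$ when $4\mid d$, and both conditions follow at once from $\gcd(d,d_1,\ldots,d_s)=1$ by inspecting multiplicities of zeros; your valuation argument costs the fundamental equality but yields the ramification data explicitly.
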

\begin{proof}
The reader may refer to [Page 11, \cite{Sc}] for a proof.
\end{proof}

\noindent For an introduction to solutions to equations over finite fields the reader may refer to \cite{IR}. The following theorem is a special case of Weil's theorem on number of solutions of equations over a finite field \cite{We}.
\begin{thm}\label{weil_thm}
(Weil) Consider the polynomial $Y^d = f(X)$ over $\F_q$, where $d \geq 1$ . Let $m$ be the degree of $f(X)$. Suppose $Y^d - f(X)$ is absolutely irreducible and  $q > 100 \cdot d \cdot m^2	$. Let $N$ be the number of zeros of the polynomial $Y^d - f(X)$. Then 
$$
\abs{N - q} \leq 4 d^{3/2} m \sqrt{q}.
$$
\end{thm}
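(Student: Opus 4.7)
The plan is to deduce the bound from Weil's Riemann hypothesis for curves over finite fields, applied to the smooth projective model of the affine curve $C : Y^d = f(X)$. By the absolute irreducibility hypothesis, $C$ is a geometrically integral affine curve over $\F_q$; let $\widetilde{C}$ denote its smooth projective model (the normalization of the projective closure of $C$), which is a smooth, projective, geometrically integral curve over $\F_q$ of some genus $g$. The first step is the Hasse--Weil inequality
$$|\#\widetilde{C}(\F_q) - (q+1)| \leq 2g\sqrt{q}.$$

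The second step is to estimate $g$ via Riemann--Hurwitz applied to the degree-$d$ morphism $\pi : \widetilde{C} \to \mathbb{P}^1$ given by the $X$-coordinate. The ramification is concentrated above the roots $\al_1,\ldots,\al_s$ of $f$ and (possibly) above $\infty$; moreover the criterion $(d,d_1,\ldots,d_s)=1$ from Lemma \ref{abs_irr_lem} guarantees that the function field extension $\F_q(\widetilde{C})/\F_q(X)$ really is of degree $d$ rather than a proper divisor. A standard ramification calculation then yields a bound of the shape $g \leq \tfrac{1}{2}(d-1)m + O(d)$.

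The third step is to compare $\#\widetilde{C}(\F_q)$ with $N$. Only two sources of discrepancy arise: points of $\widetilde{C}$ lying above $\infty \in \mathbb{P}^1$ (of which there are at most $d$), and points of $\widetilde{C}$ that desingularize singular affine points of $C$; both contributions together are $O(dm)$. Combining the three steps gives
$$|N - q| \leq 2g\sqrt{q} + O(dm) + 1,$$
and the hypothesis $q > 100\,d\,m^2$ absorbs the lower-order additive terms into the main term, yielding the inequality $|N - q| \leq 4 d^{3/2} m \sqrt{q}$.

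The main obstacle is bookkeeping: Hasse--Weil itself is a classical black box, but to reach exactly the coefficient $4 d^{3/2}m$ one must track the genus estimate and the normalization/infinity corrections carefully, and use the quantitative hypothesis $q > 100 d m^2$ at the right moment. Since the theorem is invoked only as a tool in later sections, a cleaner route is to appeal directly to the treatment of superelliptic equations $Y^d = f(X)$ in Schmidt's monograph \cite{Sc}, where precisely this form of the inequality is established.
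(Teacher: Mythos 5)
Your proposal is correct and, in the end, takes the same route as the paper: the paper's entire proof of this theorem is a citation to Schmidt's monograph \cite{Sc} (page 10), which is exactly where you land in your final paragraph. The preceding Hasse--Weil plus Riemann--Hurwitz sketch is an accurate outline of what lies behind that reference, and the constant-tracking you flag as the remaining bookkeeping is precisely what Schmidt carries out, so deferring to \cite{Sc} is the right call.
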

\begin{proof}
This is a consequence of Weil's result \cite{We}. For a proof the reader may refer to [Page 10, \cite{Sc}].
\end{proof}

\begin{prop}\label{abs_irr}
Let $c$ be any nonzero element in $\F_q$, and consider the polynomial $F := X^{k} + Y^{k} - c$ in $\F_q[X,Y]$. For all sufficiently large $q$, there exists a solution $(x,y) \in \F_q \times \F_q$ such that  
\begin{enumerate}[(a)]
\item \label{sol}
$F(x, y) = 0$.
\item \label{eq}
$x^k \neq y^k$.
\item \label{prod}
Moreover, for any $\lam \in \F_p$, the solution $(x,y)$ satisfying $(1)$ and $(2)$ may be chosen so that $x^k y^k \neq \lam$.
\end{enumerate}
\end{prop}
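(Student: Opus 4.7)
The plan is to count $\F_q$-solutions of $F = 0$ via Weil's theorem (Theorem \ref{weil_thm}) and then show that only $O_k(1)$ of them fail (b) or (c). The one delicate point is that $Y^k - (c - X^k)$ need not be absolutely irreducible: if $p := \trm{char}(\F_q)$ divides $k$, then over $\overline{\F_q}$ every root of $c - X^k$ has multiplicity $p^e$, where $k = p^e k'$ with $\gcd(p, k') = 1$, so criterion (3) of Lemma \ref{abs_irr_lem} fails. I would resolve this by passing to the separable reduction: pick the unique $c_0 \in \F_q$ with $c_0^{p^e} = c$ and use $(a + b)^{p^e} = a^{p^e} + b^{p^e}$ in characteristic $p$ to obtain
\[
X^k + Y^k - c = (X^{k'} + Y^{k'} - c_0)^{p^e}.
\]
The $\F_q$-zero sets of $F$ and of $G := X^{k'} + Y^{k'} - c_0$ therefore agree, and I would work with $G$ from here on.

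For $G$, the polynomial $c_0 - X^{k'}$ splits over $\overline{\F_q}$ into $k'$ \emph{distinct} linear factors, because $c_0 \neq 0$ and $\gcd(p, k') = 1$ make it separable. All multiplicities equal $1$, so by Lemma \ref{abs_irr_lem} the polynomial $Y^{k'} - (c_0 - X^{k'})$ is absolutely irreducible. Theorem \ref{weil_thm} with $d = m = k'$ then yields, for $q > 100 (k')^3$,
\[
N_0 := \#\{(x,y) \in \F_q^2 : G(x,y) = 0\} \;\geq\; q - 4(k')^{5/2} \sqrt{q}.
\]

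Next I would bound the number of solutions of $G = 0$ that violate (b) or (c). Injectivity of the $p^e$-th power map on $\F_q$ turns $x^k = y^k$ into $x^{k'} = y^{k'}$, which on $G = 0$ forces $2 x^{k'} = c_0$: this is impossible in characteristic $2$, and otherwise pins $x^{k'} = y^{k'} = c_0 / 2$, yielding at most $(k')^2$ pairs. For the condition $x^k y^k = \lam$, the same injectivity gives $(xy)^{k'} = \lam_0$ for the unique $\lam_0 \in \F_q$ with $\lam_0^{p^e} = \lam$, so $u := xy$ takes at most $k'$ values; for each nonzero $u$, substituting $x = u/y$ into $G = 0$ yields a polynomial equation of degree $2k'$ in $y$, bounding the count by $2k'$, while the case $u = 0$ contributes at most $2k'$ pairs directly. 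The total bad count is thus at most $3(k')^2 + 2k'$, independent of $q$.

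Comparing this with the Weil lower bound shows that for $q$ sufficiently large in terms of $k$ the good solutions must exist, completing the proof. The only genuine obstacle is the initial absolute-irreducibility reduction, since Weil's theorem does not apply to $Y^k - (c - X^k)$ directly when $p \mid k$; once $G$ is in hand the rest is Weil together with elementary degree counts.
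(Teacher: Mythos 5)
Your proposal is correct and follows essentially the same route as the paper: reduce to the separable exponent via the Frobenius, establish absolute irreducibility through Lemma \ref{abs_irr_lem}, apply Theorem \ref{weil_thm}, and then discard the $O_k(1)$ solutions violating (b) or (c) by elementary degree counts. Your handling of the inseparable case (writing $F = G^{p^e}$ explicitly and checking that conditions (b) and (c) transfer under the $p^e$-th power map) is in fact a more careful rendering of the paper's terser ``without loss of generality $p \nmid k$'' step.
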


\begin{proof} Let $p$ be the characteristic of $\F_q$. The $p$th power map on $\F_q$ is an automorphism of $\F_q$, so the equation $X^{l} + Y^l - c$ has a solution satisfying the conditions in the conclusion if and only if its $p$th power  
$$
(X^{l} + Y^{l} - d)^p = X^k + Y^k - c^p.
$$
has a solution satisfying the conditions in the conclusion. Hence we may assume without loss of generality that $p \nmid k$. The derivative of the polynomial $f(X) := - (X^k - c)$ is nonzero, so $f(X)$ is separable, and therefore by the implication $(3) \implies (1)$ of Lemma \ref{abs_irr_lem}, the polynomial $F(X,Y) := Y^k + X^k - c$ is absolutely irreducible over $\F_q$.  Therefore, by Theorem \ref{weil_thm}, the number of solutions $N$ to $F(X,Y)$ satisfies the inequality 
$$
\abs{N - q} \leq 4 k^{5/2} \sqrt{q}.
$$

\non Now the set of solutions satisfying \eqref{sol} and \eqref{eq} is finite. Indeed, if $(x,y)$ is a solution to $F$ with $x^k= y^k$ and if $\zeta_k$ is the $k$th root of unity in the algebraic closure of $\F_q$ then $(x, \zeta_k x)$ are all the possible solutions $(x,y)$ such that $x^k = y^k$. Furthermore, the set of solutions satsifying \eqref{sol} and \eqref{prod} is also finite. Indeed, let $x$ and $y$ be chosen as above satiyfing the system of equations $X^k + Y^k = c$ and $
X^k Y^k = \lam$. Then $X^k(c - X^k) = \lam$ so that $x$ is a root of the polynomial $X^{2k} - c X^k + \lam$, which has  at most $2k$ number of roots in $\F_q$. Corresponding to each root $x$, there exists at most $k$ number of $y$ such that $(x,y)$ is a solution to the system. Thus the number of solutions to the system is at most $2k^2$. Therefore, there is a solution $(x,y)$ satisfying the desired conditions \eqref{sol}, \eqref{eq} and \eqref{prod}, if $q$ is sufficiently large, for instance when $q >  400 \cdot k ^{5/2} \sqrt{q} + k + 2k^2$, so that for $C_k = (k + 2 k^2)^2$, the conclusion holds.
\end{proof}

\section{$n =1, 2$ case}\label{n2_section}

\non We begin by observing the following elementary facts. 
\begin{lem} \label{reduction_lem}
Let $k, m_1, m_2, m$ and $n$ be positive integers. Let $A, B, C, D, P$ and $Q$ be matrices over $\F_q$ for a fixed $q$.
\begin{enumerate}
  \item \label{rep}
The representation of $A$ as a sum of $k$th powers is invariant under conjugation by $GL_n(\F_q)$. In other words, if $A$ is a sum of $k$th powers, so is any conjugate of $A$.
  \item
 If $A$ and $B$ are matrices of same size $n$, and if $A$ is a sum of $m_1$ $k$th powers and $B$ is a sum of $m_2$ $k$th powers then the matrix sum $A + B$ is a sum of $(m_1 + m_2)$ $k$th powers. 
\item \label{dir_sum}
 If  $A$ is a sum of $m_1$ $k$th powers and  $B$ is a sum of $m_2$ $k$th powers, then $A \oplus B$ is a sum of $m$ $k$th powers where $m$ is the maximum of $m_1$ and $m_2$.
 
\item \label{dir_conj}
Let $A$ and $C$ be $r \times r$ matrices, and $B$ and $D$ be $s \times s$ matrices. Then  $(A \oplus B) \cdot ( C \oplus D) = AC \oplus BD$, where $\cdot$ denotes matrix multiplication. In particular, for any $k\geq 1$, $(A \oplus B)^k = A^k \oplus B^k$. Furthermore, the inverse of $A \oplus B$ exists if and only if $A$ and $B$ are invertible, in which case $(A \oplus B)^{-1} =A^{-1} \oplus B^{-1}$. In particular, if $P \in GL_r(\F_q)$ and $Q \in GL_s(\F_q)$ then 
$$
(P \oplus Q) \cdot (A  \oplus B) \cdot (P^{-1} \oplus Q^{-1}) = P A P^{-1} \oplus Q A Q^{-1}.
$$
\end{enumerate}
\end{lem}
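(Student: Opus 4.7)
The plan is to handle the four parts in the order (4), (1), (2), (3), since part (4) supplies the algebraic identity $(A \oplus B)^k = A^k \oplus B^k$ that is needed for part (3), while parts (1) and (2) are essentially one-line consequences of the definition.

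For part (4), I would start from the block-multiplication rule. Writing out the product of two block-diagonal matrices in block form gives $(A \oplus B)(C \oplus D) = AC \oplus BD$ directly. A straightforward induction on $k$ then yields $(A \oplus B)^k = A^k \oplus B^k$. From $(A \oplus B)(A^{-1} \oplus B^{-1}) = I_r \oplus I_s = I_{r+s}$ one obtains both directions of the invertibility claim together with the formula for the inverse (for the converse, if $A$ were singular then the first $r$ rows of $A \oplus B$ would be linearly dependent, so $A \oplus B$ would also be singular). The conjugation identity then follows from combining the multiplication rule with the formula for the inverse.

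For part (1), if $A = \sum_{i=1}^m M_i^k$ and $P \in GL_n(\F_q)$, then $P A P^{-1} = \sum_{i=1}^m P M_i^k P^{-1} = \sum_{i=1}^m (P M_i P^{-1})^k$, so any conjugate of $A$ is again a sum of $m$ $k$th powers. Part (2) is simply the concatenation of the two representations: adding the two expressions $A = \sum_{i=1}^{m_1} M_i^k$ and $B = \sum_{j=1}^{m_2} N_j^k$ produces a representation of $A+B$ as a sum of $m_1 + m_2$ $k$th powers.

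For part (3), set $m := \max(m_1, m_2)$ and extend the shorter sum by adjoining zero summands, using the trivial identity $0 = 0^k$. This gives representations $A = \sum_{i=1}^m M_i^k$ and $B = \sum_{i=1}^m N_i^k$ in which $M_i$ is the $r \times r$ zero matrix for $i > m_1$ and $N_i$ is the $s \times s$ zero matrix for $i > m_2$. Applying the identity $(M_i \oplus N_i)^k = M_i^k \oplus N_i^k$ from part (4) yields
\[
A \oplus B = \sum_{i=1}^m M_i^k \oplus \sum_{i=1}^m N_i^k = \sum_{i=1}^m \bigl(M_i^k \oplus N_i^k\bigr) = \sum_{i=1}^m (M_i \oplus N_i)^k,
\]
which exhibits $A \oplus B$ as a sum of $m$ $k$th powers. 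No step here is an obstacle; the only conceptual point worth flagging is the padding-by-zero trick in (3), which requires noting that $0 = 0^k$ so that the zero matrix costs nothing in the count of summands.
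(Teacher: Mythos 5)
Your proposal is correct and matches the paper's treatment: the conjugation argument for part (1) is identical to the one the paper writes out, and parts (2)--(4), which the paper dismisses as ``straightforward,'' are filled in exactly as one would expect (block multiplication, concatenation of representations, and the padding-by-zero trick $0 = 0^k$ for part (3)). No issues.
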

\begin{proof}
For part \eqref{rep}, observe that if $ A = B_1^k + \ldots + B_m^k$ for some $B_i \in M_n(\F_q)$ then for $P \in GL_n(\F_q)$,
\begin{align*}
PAP^{-1} &= P (B_1^k + \ldots + B_m^k) P^{-1} = P B_1^k P^{-1} + \ldots + P B_m^k P^{-1} \\
&= (P B_1 P^{-1})^k + \ldots + (P B_m P^{-1})^k.
\end{align*}
The remaining parts are straightforward.
\end{proof}
\non The action of $GL_n(\F_q)$ on $M_n(\F_q)$ by conjugation partitions $M_n(\F_q)$
into conjugacy classes, with each conjugacy class represented by a matrix in
rational canonical form [Theorem B-3.46, \cite{Ro}]. By definition, a rational
canonical form of a matrix in $M_n(\F_q)$  is a direct sum $ C(g_1) \oplus \ldots \oplus C(g_r)$  of companion matrices $C(g_i)$, where $g_i$ are monic polynomials in $\F_q[x]$, such that $g_1 \mid g_2 \mid \ldots \mid g_{s}$. For $g := x^n -a_{n-1} x^{n-1} \ldots - a_1 x - a_0$ in
$\F_q[x]$ its companion matrix $C(g)$ is the $n \times n$ matrix is
\begin{equation} \label{companion_matrix}
C(g) := 
\begin{bmatrix} 
0 & 1 & 0 & \ldots & 0  \\ 
0 & 0 & 1 & \ldots & 0 \\ 
\vdots &\vdots & \vdots & \vdots  & \vdots \\
0 & 0 & 0 & \ldots & 1 \\
a_0 &a_1 & 0 &\cdots & a_{n-1}
\end{bmatrix}
\end{equation}
By Lemma \eqref{reduction_lem} \eqref{rep} \eqref{dir_sum}, and \eqref{dir_conj}, it follows that to express a matrix as a sum of $k$th powers it suffices to express companion matrices of polynomials in $\F_q[x]$ as a sum of $k$th powers. \\

\noindent First we consider the representibility of $1 \times 1$ matrices. Consider the following well-known result due to Small \cite{Sm}.

\begin{prop}\label{n1prop}
(Small) Fix an integer $k \geq 1$. For all sufficiently large finite fields $\F_q$ with $q > k^4$, every element of $\F_q$ is a sum of two $k$th powers.
\end{prop}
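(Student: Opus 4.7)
The plan is to reduce directly to the previously proved Proposition \ref{abs_irr}. Given $c \in \F_q$, the goal is to write $c = x^k + y^k$ for some $x, y \in \F_q$. If $c = 0$, I simply take $x = y = 0$, which is a trivial representation. If $c \neq 0$, I invoke Proposition \ref{abs_irr}\eqref{sol} applied to the polynomial $X^k + Y^k - c$: it produces a solution $(x, y) \in \F_q \times \F_q$ of $X^k + Y^k = c$ for all $q$ larger than some constant depending only on $k$, which gives the required expression. The extra conditions \eqref{eq} and \eqref{prod} of that proposition are stronger than needed here and can be discarded.

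To match the explicit bound $q > k^4$ written in the statement, I would apply Weil's theorem (Theorem \ref{weil_thm}) directly to the affine curve $Y^k = c - X^k$ for $c \neq 0$. Exactly as in the proof of Proposition \ref{abs_irr}, the Frobenius $x \mapsto x^p$ reduces the question to the case $p \nmid k$; then $c - X^k$ is separable, so $Y^k + X^k - c$ is absolutely irreducible by Lemma \ref{abs_irr_lem}. Weil's inequality yields $|N - q| \leq 4 k^{5/2} \sqrt{q}$, and hence $N \geq 1$ as soon as $q$ exceeds a fixed polynomial in $k$.

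The main obstacle is essentially cosmetic: the Weil estimate in the form stated produces a threshold of order $k^5$, somewhat larger than the sharper $k^4$ appearing in Small's original argument \cite{Sm}. Since every subsequent use of this proposition in the paper only requires the existence of \emph{some} constant $C_k$ depending only on $k$, I would not pursue a refined bound. The mathematical content lies entirely in the absolute irreducibility argument and the Weil estimate already developed in \S \ref{Weil_results}, and no genuinely new input is required.
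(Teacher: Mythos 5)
Your argument is correct, but it takes a different route from the paper: the paper offers no proof of Proposition \ref{n1prop} at all, simply citing Small \cite{Sm}, whereas you derive it from the Weil machinery of \S\ref{Weil_results}. The derivation is sound and non-circular (Proposition \ref{abs_irr} is proved independently of Proposition \ref{n1prop}, and its condition \eqref{sol} alone already gives $c=x^k+y^k$ for $c\neq 0$, with $c=0$ handled trivially); the absolute irreducibility of $Y^k+X^k-c$ for $c\neq 0$ does go through, since after reducing to $p\nmid k$ the polynomial $c-X^k$ has no common root with its derivative $-kX^{k-1}$. The one caveat is the explicit threshold: the statement asserts the conclusion for all $q>k^4$, and your Weil-based bound only yields a threshold of order $k^5$ (the paper's own proof of Proposition \ref{abs_irr} has the same feature, despite its claim that $C_k=(k+2k^2)^2$ suffices). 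Recovering the sharp $q>k^4$ requires Small's original character-sum computation. Since every later application in the paper invokes only ``for all sufficiently large $q$,'' your weaker constant is harmless, but strictly speaking you have proved a slightly weaker statement than the one displayed. What your approach buys is self-containedness --- the whole paper then rests on a single input, Theorem \ref{weil_thm} --- at the cost of the optimal constant; what the citation buys is the sharp bound with no additional work.
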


In this section, we prove that every $2 \times 2$ matrix over $\F_q$ is a sum of two $k$th powers for any $k \geq 1$  and for all sufficiently large $q$. The following lemma is elementary and well known, but for the sake of completeness we demonstrate it here.
\begin{lem}\label{rep_eig}
Suppose the characteristic polynomial of a  matrix $A$ in $M_2(\F_q)$ has a repeated eigenvalue $\lam 
\in \F_q$. Then $A$ is similar either to the diagonal matrix $\diag(\lam, \lam)$ or to $\begin{bmatrix} \lam & 1 \\  0 & \lam \end{bmatrix}$.
\end{lem}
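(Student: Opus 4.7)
The plan is to reduce the similarity question to an analysis of the nilpotent matrix $B := A - \lambda I$ and then split into two cases depending on whether $B$ vanishes. Since $\lambda$ is a repeated eigenvalue of a $2\times 2$ matrix, the characteristic polynomial of $A$ is $(x - \lambda)^2$, and by the Cayley--Hamilton theorem, $(A - \lambda I)^2 = 0$. Thus $B^2 = 0$.

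First I would handle the easy case $B = 0$, which gives $A = \lambda I = \diag(\lambda, \lambda)$ directly, with no conjugation needed. Next, suppose $B \neq 0$. The key step is to choose a vector $v \in \F_q^2$ with $Bv \neq 0$, and then set $w := Bv$. I would verify that $\{w, v\}$ is a basis of $\F_q^2$: if $v$ were a scalar multiple of $w$, say $v = cw$, then $Bv = cBw = cB^2 v_0 = 0$ (where $w = Bv_0$, noting $Bw = B^2 v = 0$), contradicting the choice of $v$. Hence $w$ and $v$ are linearly independent.

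In the ordered basis $(w, v)$, the action of $B$ is described by $Bw = B^2 v = 0$ and $Bv = w$, so $B$ has matrix $\begin{bmatrix} 0 & 1 \\ 0 & 0 \end{bmatrix}$ in this basis. Consequently $A = B + \lambda I$ has matrix $\begin{bmatrix} \lambda & 1 \\ 0 & \lambda \end{bmatrix}$ in this basis, which is precisely to say $A$ is similar (over $\F_q$) to that matrix via the change-of-basis matrix whose columns are $w$ and $v$.

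Because the argument is completely elementary, there is no substantial obstacle; the only care required is verifying that the pair $\{Bv, v\}$ really is a basis, which relies on $B \neq 0$ together with $B^2 = 0$. I would prefer this direct construction over invoking rational canonical form as a black box, since it keeps the lemma self-contained and the change-of-basis explicit — which may be useful later when the authors need to track conjugations in the Waring decomposition.
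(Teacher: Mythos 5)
Your proof is correct and takes essentially the same approach as the paper: both reduce to showing the nonzero nilpotent matrix $B = A - \lam I$ is similar to $\begin{bmatrix} 0 & 1 \\ 0 & 0 \end{bmatrix}$ by exhibiting an explicit basis. The only cosmetic difference is that you start from a vector $v$ outside $\ker B$ and take the basis $(Bv, v)$, whereas the paper starts from a kernel vector and lifts it; your variant avoids having to observe that $\ker B = \operatorname{im} B$.
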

\begin{proof}
Suppose $A$ is not similar $\diag(\lam, \lam)$. Let $B := A - \lam I$ which is nonzero and nilpotent. We claim that it is similar to the nilpotent matrix $\begin{bmatrix} 0 & 1 \\ 0 & 0 \end{bmatrix}$ so that $A$ is similar to the matrix in desired form. If $v$ is an eigenvector corresponding to the eigenvalue $\lam$ of $A$, then $Bv = 0$, so that $B$ has nontrivial kernel, viewing $B$ as a $\F_q$-linear transformation from $\F_q^2 \to \F_q^2$. Choose a nonzero $v$ such that $Bv = 0$ and define a matrix $P$ whose first column is $v$ and the second column to be any vector $w$ such that $B w = v$, which exists because the kernel of $B$ is equal to the image of $B$. Then $P^{-1} B P = \begin{bmatrix} 0 & 1 \\ 0 & 0 \end{bmatrix}$ as desired.
\end{proof}

\begin{lem}\label{n2diagcase} Suppose a matrix $A$ be similar to the diagonal matrix $\diag(\lam, \mu)$ over $\F_q$. Then, for any $k \geq 1$, and for all sufficiently large $q$, the matrix $A$ is a sum of two $k$th powers.\end{lem}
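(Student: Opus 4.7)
The plan is to reduce to the case $A = \diag(\lam, \mu)$ and then apply Small's scalar result entrywise.

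First I would use Lemma \ref{reduction_lem}\eqref{rep}: being a sum of two $k$th powers is a conjugation-invariant property, so there is no loss in assuming $A = \diag(\lam, \mu)$ itself.

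Next, by Proposition \ref{n1prop} (Small's theorem), once $q$ is large enough (for instance, $q > k^4$), every element of $\F_q$ is a sum of two $k$th powers. Applied to $\lam$ and to $\mu$ separately, this gives $a, b, c, d \in \F_q$ with $\lam = a^k + b^k$ and $\mu = c^k + d^k$. Setting $X = \diag(a, c)$ and $Y = \diag(b, d)$, the identity
\[
X^k + Y^k \;=\; \diag(a^k, c^k) + \diag(b^k, d^k) \;=\; \diag(\lam, \mu) \;=\; A
\]
finishes the proof.

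There is essentially no obstacle here; the content is just that a diagonal matrix is a sum of $k$th powers of diagonal matrices, which reduces via Small's theorem to the scalar case. The heavier machinery of Weil's theorem assembled in Proposition \ref{abs_irr} is not needed for this step — it will presumably be brought to bear in the more delicate non-diagonalizable cases (the Jordan block $\begin{bmatrix}\lam & 1 \\ 0 & \lam\end{bmatrix}$ and matrices with characteristic polynomial irreducible over $\F_q$) and in the $n \geq 3$ arguments of \S \ref{n3_section}.
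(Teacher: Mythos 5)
Your proof is correct and is essentially identical to the paper's: reduce to $A = \diag(\lam,\mu)$ by conjugation invariance, apply Proposition \ref{n1prop} to $\lam$ and $\mu$ separately, and assemble the two diagonal $k$th powers. Nothing further is needed.
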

\begin{proof}
The expression of a matrix as a sum of $k$th powers is invariant under conjugation, so $A$ may be assumed to be diagonal matrix $\diag(\lam ,\mu )$. By Proposition \ref{n1prop}, for all sufficiently large $q$, the elements $\lam, \mu \in \F_q$ are sums of two $k$th powers, so that $\lam = a_1^k + a_2^k$ and $\mu = b_1^k + b_2^k$, with $a_1, a_2, b_1, b_2 \in \F_q$. Then,
$$
A = \diag( \lam, \mu) = \diag(a_1^k+ a_2^k, b_1^k + b_2^k) = \diag(a_1, b_1)^k + \diag(a_2, b_2)^k.
$$
\end{proof}

\noindent Before proving the main result, we need the following technical lemma.

\begin{prop}\label{n2unipotentcase}
For any element $\lam \in \F_q$, the matrix $A := \begin{bmatrix} \lam & 1 \\ 0 & \lam \end{bmatrix}$ is a sum of two $k$th powers.
\end{prop}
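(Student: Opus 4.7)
The strategy is to write $A = B^k + C^k$ by controlling the eigenvalue structure of $B^k$ and $C^k$. The key observation is that if $M \in M_2(\F_q)$ is diagonalizable with distinct eigenvalues $x^k, y^k$ for some $x, y \in \F_q$, then $M$ is similar to $\diag(x, y)^k$, so $M = B^k$ for a suitable $B \in M_2(\F_q)$.

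For $\lam \neq 0$, I apply Proposition \ref{abs_irr} with $c = \lam$ to produce $(\al, \gam) \in \F_q^2$ with $\al^k + \gam^k = \lam$ and $\al^k \neq \gam^k$ (so $\al \neq \gam$). Set $s := (\gam - \al)/(\gam^k - \al^k)$, $B := \begin{bmatrix} \al & s \\ 0 & \gam \end{bmatrix}$, and $C := \diag(\gam, \al)$. Diagonalizing $B$ via the change-of-basis $P := \begin{bmatrix} 1 & s/(\gam - \al) \\ 0 & 1 \end{bmatrix}$ and computing yields $B^k = \begin{bmatrix} \al^k & 1 \\ 0 & \gam^k \end{bmatrix}$ (the choice of $s$ forces the off-diagonal entry to be $1$), while $C^k = \diag(\gam^k, \al^k)$; their sum is exactly $A$.

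For $\lam = 0$, the curve $X^k + Y^k = 0$ is reducible, so Proposition \ref{abs_irr} does not apply directly. Instead, I invoke it twice at auxiliary nonzero values. Pick any $c \in \F_q^*$, apply Proposition \ref{abs_irr} to $X^k + Y^k = -c$ to find $(u, v) \in \F_q^2$ with $u^k + v^k = -c$ and $u^k \neq v^k$, and set $\mu := u^k v^k$. Then apply Proposition \ref{abs_irr}(c) to $X^k + Y^k = c$, with the excluded product value $\mu$, to obtain $(x, y)$ with $x^k + y^k = c$, $x^k \neq y^k$, and $x^k y^k \neq \mu$. Let $c_{21} := \mu - x^k y^k \neq 0$, and define
$$M_1 := \begin{bmatrix} 0 & -x^k y^k / c_{21} \\ c_{21} & c \end{bmatrix}.$$
Direct computation shows $M_1$ has trace $c$ and determinant $x^k y^k$, so its eigenvalues are the distinct elements $x^k, y^k$; hence $M_1 = B^k$ for some $B$ similar to $\diag(x, y)$. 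A parallel check shows $M_2 := A - M_1$ has trace $-c$ and determinant $\mu$, hence distinct eigenvalues $u^k, v^k$, so $M_2 = C^k$ for some $C$ similar to $\diag(u, v)$. Therefore $A = M_1 + M_2 = B^k + C^k$.

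The main technical obstacle is ensuring that the applications of Proposition \ref{abs_irr} succeed, which is exactly the sufficient-largeness hypothesis of that proposition. The $\lam = 0$ case is the more delicate one, and it is precisely clause (c) of Proposition \ref{abs_irr} --- the freedom to avoid a prescribed product value --- that allows us to force $c_{21} \neq 0$ and complete the construction.
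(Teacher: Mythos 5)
Your proof is correct. It overlaps with the paper's argument in one half and genuinely improves on it in the other. The paper treats all $\lam$ uniformly: it builds two matrices with prescribed traces summing to $2\lam$ and prescribed determinants, so that each is similar to a diagonal matrix whose entries are $k$th powers (hence is itself a $k$th power), and then conjugates at the end to normalize the off-diagonal entry to $1$. Your $\lam=0$ case is essentially this same trace--determinant device, with traces $c$ and $-c$; your trace/determinant computations for $M_1$ and $M_2$ check out, and $c_{21}=\mu-x^ky^k\neq 0$ is exactly what clause (c) of Proposition \ref{abs_irr} delivers when the excluded product is taken to be $\mu=u^kv^k$. This is in fact slightly cleaner than the paper's corresponding step, which asserts the analogous nonvanishing ($b'\neq -b$) from conditions that do not quite imply it unless the excluded value in clause (c) is chosen as you do. Your $\lam\neq 0$ case is the genuinely different (and slicker) part: the explicit upper-triangular $B$ with $s=(\gam-\al)/(\gam^k-\al^k)$ gives $B^k$ upper triangular with diagonal $(\al^k,\gam^k)$ and corner entry exactly $1$, so that adding $\diag(\gam,\al)^k$ produces $A$ on the nose --- one application of Proposition \ref{abs_irr}, no use of clause (c), and no final conjugation. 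You also rightly flag that $\lam=0$ cannot be fed directly into Proposition \ref{abs_irr} (which requires $c\neq 0$ and whose curve degenerates at $c=0$), a point the paper circumvents by requiring $a+d\neq 0$ but does not call attention to.
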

\begin{proof}
Let $a, d$ be elements in $\F_q$ such that $a +d \neq 0$ and $2 \lam \neq a + d$. By Proposition \ref{abs_irr} there exist  $r, s \in \F_q$ such that
\begin{enumerate}[(a)]
\item \label{conda}
   $r \neq s $ and $r + s = a + d$ (with $c$  takes as $a + d$ in Proposition \ref{abs_irr}).
\item  \label{condb}
  $r$ and $s$ are $k$th powers.
\item \label{condc}
 $rs \neq ad$ (with `$\lam$' taken as $ad$ in Proposition \ref{abs_irr})
\end{enumerate}   
Let $c$ be an arbitrary element in $\F_q$. Define $b$ as 
$$
b = 
\begin{cases} 
0 & \textrm{ if } c = 0 \\
(-c)^{-1}(rs - ad) & \textrm{ if } c \neq 0.
\end{cases}
$$
Then $b$ is nonzero if and only if $c$ is nonzero by condition \eqref{condc}. The characteristic polynomial of the matrix $B := \begin{bmatrix} a & b \\ c & d \end{bmatrix}$ is $X^2 - (a + d ) X + (ad - bc)$, which is equal to $X^2 - (r + s) X + r s$ due to \eqref{conda} and the definition of the element $b$ above. Therefore, $B$ is similar to the diagonal matrix $\diag(r,s)$ which is a $k$th power, due to \eqref{condb}.

Now, due to Proposition \ref{abs_irr} again, there exist distinct $t, u \in \F_q$ with $a$ and $d$ are \textit{as chosen above}, 
\begin{itemize} 
  \item
  $t \neq u$ and $t + u = (\lam - a) + (\lam -d)$.
  \item 
  $t$ and $u$ are $k$th powers.
  \item
  $tu \neq (\lam -a) (\lam -d)$.
\end{itemize}
Define
$$
b' := 
\begin{cases} 
0 & \trm{ if } c = 0 \\
c^{-1}(tu - (\lam -a)(\lam -d)) & \trm{ if } c \neq 0.
\end{cases}
$$
Then the matrix $C := \begin{bmatrix} \lam - a & b' \\ -c & \lam -d \end{bmatrix}$ is conjugate to the diagonal matrix $\diag(t,u)$. 

Clearly $B + C = \begin{bmatrix} \lam & b + b' \\ 0 & \lam \end{bmatrix}$, and if $c \neq 0$ then $b' \neq - b$ so that their sum $x := b + b' \neq 0$. The  following relation
$$
\begin{bmatrix} x^{-1} & 0 \\ 0 & x \end{bmatrix} \begin{bmatrix} \lam & x \\ 0 & \lam \end{bmatrix} \begin{bmatrix} 1 & 0 \\ 0 & x \end{bmatrix} = \begin{bmatrix} \lam & 1 \\ 0 & \lam \end{bmatrix},
$$
implies that $A$ is conjugate to the sum $B + C$ and therefore it is a sum of two $k$th powers as desired.
\end{proof}

\noindent We now prove the first main result of this article. 
\begin{proof} (of Theorem \ref{intro_main_thm_1})
Let $A \in M_2(\F_q)$, and $p(t)$ denote its characteristic polynomial. We consider three cases based on the nature of splitting $p(t)$. First, if $p(t)$ splits completely over $\F_q$ with distinct roots, then $A$ is conjugate to a diagonal matrix with eigenvalues lying along the diagonal. By Lemma \ref{n2diagcase} $A$ is a sum of two $k$th powers, for all sufficiently large $q$.
Second, suppose $p(t)$ splits completely over $\F_q$ with repeated root $\lam$. Then $A$ is similar to a diagonal matrix $\diag(\lam,\lam)$ or to the upper triangular matrix $\begin{bmatrix} \lam & 1 \\ 0 & \lam \end{bmatrix}$. In the former case the assertion follows by Lemma \ref{n2diagcase} and in the latter case by Proposition \ref{n2unipotentcase}. Finally, suppose $p(t)$ is irreducible over $\F_q$. Consider the evaluation map $\ev_A : \F_q[t] \to M_2(\F_q)$ defined by sending $t$ to $A$. It is an $\F_q$-algebra homomorphism with kernel as the ideal generated by $p(t)$ and image as the $\F_q$-algebra, say $\F_q[A]$, generated by the identity matrix $I_2$ and $A$. Therefore, $\F_q[A]$ is isomorphic to $\F_{q^2}$, and by Proposition \ref{n1prop} every element in $\F_{q^2}$, hence $A$, is a sum of two $k$th powers for all sufficiently large $q$.
\end{proof}

\section{$n \geq 3$ case}\label{n3_section}
Before we state our main results, we note one fact: the cardinality of the subgroup consisting of the $k$th powers of elements of $\F_q$ is $(q-1)/ \gcd(k,q-1) + 1$, which is at least $q/k$ so that if $q$ is sufficiently large  there exists sufficiently many elements in $\F_q$ that are not $1$ and  that are $k$th powers ; see [Proposition 7.1.2, \cite{IR}].
\begin{prop}\label{elelem2}
Let $q = p^l$ with $p$ prime, $l \geq 1$. Let $k \geq 1$ be a positive integer. Then $-1$ is a $k$th power (of an element in $\F_q$)  precisely in the following cases:
\begin{enumerate} [(a)]
\item \label{p2}
 $p = 2$.
\item \label{kodd}
 $k$ is odd.
 \item \label{kevencase}
  $k$ is even of the form $2^s t$ with $s \geq 1$ and $t$ odd and $q \equiv 1 \pmod{2^{s+1}}$. 
\end{enumerate}
In the remaining cases it is a sum of two $k$th powers.
\end{prop}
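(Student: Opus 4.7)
The plan is to split into the three listed cases plus the complementary case, handling (a) and (b) by exhibiting an explicit $k$th root of $-1$, handling (c) via the cyclic structure of $\F_q^\ast$, and deducing the final assertion from Proposition \ref{n1prop}.

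Cases (a) and (b) are immediate: if $p = 2$ then $-1 = 1 = 1^k$, and if $k$ is odd then $-1 = (-1)^k$. So the interesting case for the characterization is when $p$ is odd and $k$ is even. Writing $k = 2^s t$ with $s \geq 1$ and $t$ odd, I would fix a generator $g$ of the cyclic group $\F_q^\ast$ and use that $-1 = g^{(q-1)/2}$ is the unique element of order $2$. The subgroup of $k$th powers is $\langle g^{d}\rangle$ with $d := \gcd(k,q-1)$, so $-1$ is a $k$th power exactly when $d \mid (q-1)/2$, equivalently $v_2(d) < v_2(q-1)$. Since $t$ is odd one has $v_2(d) = \min(s, v_2(q-1))$, so the condition becomes $s < v_2(q-1)$, i.e.\ $2^{s+1} \mid q-1$, which is precisely (c). This simultaneously proves the implication in (c) and the converse: if none of (a), (b), (c) holds then $p$ is odd, $k = 2^s t$ with $s \geq 1$, $t$ odd, and $v_2(q-1) \leq s$, so $d \nmid (q-1)/2$ and $-1$ is not a $k$th power.

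For the last sentence of the statement, note that the hypothesis implicit throughout this section is that $q$ is sufficiently large. In the complementary regime where $-1$ fails to be a $k$th power, I would simply invoke Proposition \ref{n1prop} (Small's theorem): for $q > k^4$, \textit{every} element of $\F_q$ is a sum of two $k$th powers, and in particular so is $-1$. Thus nothing new needs to be proved for this clause; it is just a specialization of \ref{n1prop} to the element $-1$.

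No step is really an obstacle: the characterization of when $-1 \in (\F_q^\ast)^k$ is a standard $2$-adic valuation computation in the cyclic group $\F_q^\ast$, and the ``sum of two $k$th powers'' clause is a direct appeal to Proposition \ref{n1prop}. The only point requiring mild care is the bookkeeping in case (c) to make sure the condition $2^{s+1}\mid q-1$ aligns correctly with $d \mid (q-1)/2$, but this is just the identity $v_2(d) = \min(s, v_2(q-1))$ applied to $d = \gcd(2^s t, q-1)$.
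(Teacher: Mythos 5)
Your proposal is correct and follows essentially the same route as the paper: both arguments reduce the question to the cyclic structure of $\F_q^\ast$ and a $2$-adic condition on $q-1$, and both dispose of the final clause by citing Proposition \ref{n1prop}. The only cosmetic difference is that you compute directly with $\gcd(k,q-1)$ and $2$-adic valuations where the paper first reduces $X^k=-1$ to $Y^{2^s}=-1$ and then argues via the order of a root; your bookkeeping is, if anything, a little cleaner.
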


\begin{proof}

The cases \eqref{p2} and \eqref{kodd} are clear. Now consider solutions to $X^k + 1 = 0$ in $\F_q$. If $q$ is even then $x = 1$ is a solution, so suppose that $q$ is odd. Write $k = 2^a b$, with $b$ odd. Then the equation $X^k +1 = 0$ has a solution  in $\F_q$ if and only if the equation $Y^{2^a} + 1 = 0$ has a solution in $\F_q$. Indeed, if $x^k + 1 = 0$ for some $x \in \F_q$, then $(x^b)^{2^a} + 1 = 0$ so that $y = x^b$ is a solution to $Y^{2^a} + 1 = 0$. Conversely, if there is $y$ in $\F_q$ such that $y^{2^a} + 1 = 0$ so that $y^{2^a} = -1$ , then $y^k = (y^{2^a})^b  = (-1)^b = -1$ so that $y$ is a solution to $x^k + 1 = 0$ in $\F_q$. Now, the equation $Y^{2^a} = - 1$ has a solution, say $y$, in $\F_q$ if and only if $(y^{2^a})^2 = 1$ so that the order of $y$ is divisible by $q-1$. In other words, $X^k + 1 = 0$ has a solution in $\F_q$ if and only if $q \equiv 1 \pmod{2^{a+1}}$. The last part is an immediate consequence of Propostion \ref{n1prop}.
\end{proof}

\non Finally, the case $n \geq 3$ can be reduced to that of $n = 1, 2$ based on the following lemma.
\begin{lem}(Botha)
Let $B$ be an $n \times n$ diagonalizable matrix over a field $\F_q$, and suppose $a \in \F_q$ is \textit{not} an eigenvalue of $B$. Then 
$$
A = \begin{bmatrix} B & v \\ 0 & a \end{bmatrix}, \trm{ where } v \in \F_q^n,
$$
is also diagonalizable.
\end{lem}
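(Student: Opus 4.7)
The plan is to exhibit $n+1$ linearly independent eigenvectors of $A$ in $\F_q^{n+1}$, using the diagonalizability of $B$ to produce $n$ of them and the invertibility of $aI - B$ to produce the last one.

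First I would observe that since $A$ is block upper triangular, its characteristic polynomial factors as $\det(\lambda I_n - B) \cdot (\lambda - a)$. Hence the spectrum of $A$ (over the algebraic closure) is the spectrum of $B$ together with $a$; in particular, since $B$ is diagonalizable, all its eigenvalues lie in $\F_q$, and so do all eigenvalues of $A$.

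Next, for every eigenpair $(\lam, w)$ of $B$ with $w \in \F_q^n$, the vector $\begin{bmatrix} w \\ 0 \end{bmatrix} \in \F_q^{n+1}$ is an eigenvector of $A$ with eigenvalue $\lam$, since
$$
A \begin{bmatrix} w \\ 0 \end{bmatrix} = \begin{bmatrix} Bw \\ 0 \end{bmatrix} = \lam \begin{bmatrix} w \\ 0 \end{bmatrix}.
$$
Because $B$ is diagonalizable over $\F_q$, we may pick a basis $w_1, \ldots, w_n$ of $\F_q^n$ consisting of eigenvectors of $B$, thereby producing $n$ linearly independent eigenvectors of $A$, all of which have last coordinate $0$.

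For the remaining eigenvalue $a$, I would look for an eigenvector of the form $\begin{bmatrix} u \\ 1 \end{bmatrix}$. The eigenvalue equation $A \begin{bmatrix} u \\ 1 \end{bmatrix} = a \begin{bmatrix} u \\ 1 \end{bmatrix}$ reduces to $Bu + v = au$, i.e., $(aI_n - B) u = v$. Since $a$ is \emph{not} an eigenvalue of $B$, the matrix $aI_n - B$ is invertible, so $u := (aI_n - B)^{-1} v$ solves the equation. The resulting vector $\begin{bmatrix} u \\ 1 \end{bmatrix}$ is an eigenvector of $A$ whose last coordinate is $1$, hence is linearly independent from the previous $n$ eigenvectors (which all had last coordinate $0$). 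This yields $n+1$ linearly independent eigenvectors of $A$ in $\F_q^{n+1}$, so $A$ is diagonalizable. There is no real obstacle here; the only thing one has to notice is that the hypothesis "$a$ is not an eigenvalue of $B$" is precisely what is needed to solve for $u$, and this same hypothesis also guarantees that the last eigenvector is genuinely new rather than already in the span of the first $n$.
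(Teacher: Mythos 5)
Your proof is correct and complete. The paper itself does not supply an argument for this lemma --- it simply cites Botha [Lemma 1.1(c), \emph{Products of diagonalizable matrices}] --- so there is no in-paper proof to compare against; your eigenvector count (the $n$ vectors $\begin{bmatrix} w_i \\ 0 \end{bmatrix}$ from an eigenbasis of $B$, plus $\begin{bmatrix} (aI_n - B)^{-1}v \\ 1 \end{bmatrix}$ for the eigenvalue $a$, with linear independence read off from the last coordinate) is the standard direct argument and uses the hypothesis that $a$ is not an eigenvalue of $B$ exactly where it is needed.
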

\begin{proof}
The reader may refer to [Lemma $1.1(c)$, \cite{Bo1}].
\end{proof}

\non We use a result and its proof both due to Botha [Lemma 2.1, \cite{Bo}] and use the $n =1,2$ case to prove Theorem \ref{intro_main_thm_2}. \\

\non For $n \geq 3$ even and $x \in \F_q$, let 
$$
G_n = \bigoplus_{n /2} \begin{bmatrix} 0 &  1\\ 0 & -x \end{bmatrix} \trm{ and } H_n = [0] \oplus \left( \bigoplus_{(n-2)/2} \begin{bmatrix} x & 1 \\ 0 &0 \end{bmatrix} \right) \oplus [x],
$$
For $n \geq 3$ odd and $x \in \F_q$, let 
$$
G_n = \left( \bigoplus_{(n-1)/2} \begin{bmatrix} 0 & 1 \\ 0 & - x \end{bmatrix} \right) \oplus [0] \trm{ and } H_n = [0] \bigoplus \left( \bigoplus_{(n-1)/2} \begin{bmatrix} x & 1 \\ 0 & 0 \end{bmatrix} \right)
$$

\begin{lem}(Botha)
If $\abs{\F_q} > 2$, then for any $n \geq 3$, any matrix $A$ in $M_n(\F_q)$ can be expressed as a sum of two diagonalisable matrices.
\end{lem}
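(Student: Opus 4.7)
The plan is to reduce the problem to the case of a single companion matrix via the rational canonical form, and then to exhibit an explicit decomposition built from the matrices $G_n$ and $H_n$ defined above. By the rational canonical form theorem together with Lemma \ref{reduction_lem} parts \eqref{rep}, \eqref{dir_sum}, and \eqref{dir_conj}, any $A \in M_n(\F_q)$ is conjugate to a direct sum of companion matrices, and since conjugation and direct sum both preserve diagonalizability, it suffices to express each individual companion block $C(g)$ arising in the decomposition as a sum of two diagonalizable matrices.

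For the small blocks, ad hoc decompositions will suffice. A $1 \times 1$ block $[a]$ is trivially $[0] + [a]$, and for the $2 \times 2$ companion matrix $\begin{bmatrix} 0 & 1 \\ a_0 & a_1 \end{bmatrix}$ one can try the decomposition $\begin{bmatrix} x & 1 \\ 0 & 0 \end{bmatrix} + \begin{bmatrix} -x & 0 \\ a_0 & a_1 \end{bmatrix}$. A direct computation shows that the discriminant of the second summand is $(a_1 + x)^2$, so its eigenvalues are $a_1$ and $-x$; both summands are therefore diagonalizable provided $x$ avoids a forbidden set of size at most two, which the hypothesis $\abs{\F_q} > 2$ permits.

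For a companion block $C(g)$ of size $n \geq 3$, the key identity, verified by direct inspection in both the even and odd cases, is $G_n + H_n = N_n$, the nilpotent $n \times n$ Jordan block with $1$'s on the superdiagonal. Writing $C(g) = N_n + L_g$, where $L_g$ is the matrix with the coefficients $(a_0, \ldots, a_{n-1})$ of $g$ in its last row and zeros elsewhere, yields the candidate decomposition $C(g) = G_n + (H_n + L_g)$. The matrix $G_n$ is diagonalizable whenever $x \neq 0$, since each of its $2 \times 2$ blocks has the distinct eigenvalues $0$ and $-x$.

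The main obstacle is to show that the second summand $H_n + L_g$, or a suitable variant, is diagonalizable for some $x \in \F_q \setminus \{0\}$. I would compute the characteristic polynomial of $H_n + L_g$ explicitly and identify the finite set of bad $x$-values for which the minimal polynomial acquires a repeated root or the geometric multiplicities drop below the algebraic ones; alternatively, the preceding Botha extension lemma can be invoked row-by-row to build up $H_n + L_g$ while preserving diagonalizability, the condition at each step being that the newly added diagonal entry lies outside the spectrum of the previously constructed block. In the handful of exceptional polynomials $g$ where the straightforward decomposition fails outright, a preliminary conjugation of $C(g)$ by an invertible matrix (for example a cyclic permutation) can be used to relocate the perturbation before applying the $G_n + H_n$ split. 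In every subcase the set of forbidden parameters is bounded independently of $q$, so the hypothesis $\abs{\F_q} > 2$ is precisely what supplies a valid choice of $x$.
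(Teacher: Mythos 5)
Your reduction to companion blocks, your handling of the $1\times1$ and $2\times2$ blocks, and the identity $G_n+H_n=N_n$ are all correct, and they are exactly the ingredients of Botha's argument. The gap is in how you assemble them: the decomposition $C(g)=G_n+(H_n+L_g)$ does not work. Take $n$ even: then $H_n+L_g$ is block lower triangular with respect to the last row/column split, with top-left block the leading $(n-1)\times(n-1)$ corner of $H_n$ (spectrum $\{0,x\}$) and corner entry $x+a_{n-1}$. The extension lemma requires the corner entry to avoid the spectrum, and when $a_{n-1}=0$ the collision $x+a_{n-1}=x$ holds for \emph{every} $x$, so no choice of $x$ repairs it. This is not just the failure of a sufficient condition: for $n=4$ and $g=t^4-a_1t$ with $a_1\neq0$, the matrix
$$
H_4+L_g=\begin{bmatrix}0&0&0&0\\0&x&1&0\\0&0&0&0\\0&a_1&0&x\end{bmatrix}
$$
has characteristic polynomial $t^2(t-x)^2$ but $\mathrm{rank}(H_4+L_g-xI)=3$, so the $x$-eigenspace is one-dimensional and the summand is genuinely non-diagonalizable for every $x\neq0$. (For $n$ odd the situation is worse still, since the $(n-1,n)$ superdiagonal entry of $N_n$ comes from $H_n$, so $H_n+L_g$ is not even block triangular.) Your fallbacks do not close this: the ``row-by-row'' use of the extension lemma is really a single application whose hypothesis fails for a reason depending on $g$ rather than on $x$, and ``conjugate by a cyclic permutation'' is not accompanied by any argument that a diagonalizable-plus-diagonalizable splitting survives.

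The fix, which is Botha's and is what the four displayed decompositions in the paper's proof of Theorem \ref{intro_main_thm_2} implement, is to distribute the last row and the last superdiagonal entry of $C(g)$ \emph{between} the two summands rather than dumping the whole last row into one of them. Strip off the last row and column, write the leading $(n-1)\times(n-1)$ block as $G_{n-1}+H_{n-1}=N_{n-1}$, attach the row $v^T=(a_0,\ldots,a_{n-2})$ to the summand built from whichever of $G_{n-1},H_{n-1}$ does not supply the $(n-1,n)$ superdiagonal entry (making that summand block lower triangular), attach the column $e_{n-1}$ to the other (making it block upper triangular), and split the corner entry $a_{n-1}$ as a sum of two scalars each lying outside the spectrum of its block --- e.g.\ $0=(-1)+1$ with $x\neq0,1$ when $a_{n-1}=0$. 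Both summands then fall under the extension lemma, and the only constraints are finitely many forbidden values of $x$, which $\abs{\F_q}>2$ accommodates. This is precisely why the paper's formulas involve $G_{n-1}$ and $H_{n-1}$ rather than $G_n$ and $H_n$.
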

\begin{proof}
The reader may refer to [Lemma $2.1$, \cite{Bo}].
\end{proof}

\non We now prove the second main result of this article:
\begin{proof} (of Theorem \ref{intro_main_thm_2}): We use the proof of the lemma to deduce our result. The following four decompositions is due to Botha \cite{Bo}, which distinguish between $n$ even and odd, and between $a_{n-1}$ zero and nonzero.

Let $A \in M_n(\F_q)$. We show that $A$ is a sum at most three $k$th powers. As noted at the beginning of \S \ref{n2_section} we may assume that  $A$ is a companion matrix \eqref{companion_matrix} of \S \ref{n2_section} . Let $v = (a_0, \ldots, a_{n-2})^T$, where $T$ denotes transpose of a matrix.\\

\non \tbf{Case $n \geq 3$ odd and $a_{n-1} = 0$}:
\begin{equation}\label{nodda0}
A = \begin{bmatrix} G_{n-1} & 0 \\ v^T & -1 \end{bmatrix} + \begin{bmatrix} H_{n-1} & e_{n-1} \\ 0 & 1 \end{bmatrix}, 
\end{equation}

\noindent where $x \neq 0,1$. Let $B$ denote the first summand, and $C$ the second summand. If $x \neq 0,1$ then the matrix $\begin{bmatrix} 0 & 0 \\ 1 & - x \end{bmatrix}$ has distinct  eigenvalues $0$ and $-x$ and the matrix $\begin{bmatrix} x & 0 \\ 1 & 0 \end{bmatrix}$ has distinct eigenvalues $0$ and $x$, so they are diagonalizable. By Lemma \ref{reduction_lem} the matrices $B$ and $C$ are diagonalizable. Furthermore let $x$ be a $k$th power distinct from $0$ and $1$; such a $x$ exists by the observation at the beginning of this section. Now, if $k$ satisfies the conclusion of Proposition \ref{elelem2}, then both $B$ and $C$ are $k$th powers so that $A$ is a sum of two $k$th powers.  In the remaining cases, $B$ is a sum of two $k$th powers by Theorem \ref{intro_main_thm_1} and $C$ is a $k$th power so that $A$	 is a sum of three $k$th powers. \\

\non \tbf{Case $n \geq 3$ odd and $a_{n-1} \neq 0$}:
\begin{equation}
A = \begin{bmatrix} G_{n-1} & 0 \\ v^T & a_{n-1} \end{bmatrix} + \begin{bmatrix} H_{n-1} & e_{n-1} \\ 0 & 0 \end{bmatrix},
\end{equation}
\noindent with $x \neq 0 , -a_{n-1}$. We choose $x$ which is a $k$th power and not equal to either $0$ or $-a_{n-1}$; such a $x$ exists by the observation at the beginning of this section. Now, if $k$ satisfies the conclusion of Lemma \ref{elelem2} then  $A$ is a sum of two $k$th powers. In the remaining cases, it follows as above that the first summand is a sum of two $k$th powers by Theorem \ref{intro_main_thm_1} and the second summand is a $k$th power so that $A$	 is a $k$th power.\\

\non \textbf{Case $n \geq 3$ even, and $a_{n-1} = 0$}:
\begin{equation}
A = \begin{bmatrix} G_{n-1} & e_n \\ 0 & -1 \end{bmatrix} + \begin{bmatrix} H_{n-1} & 0 \\ v^T & 1 \end{bmatrix},
\end{equation}
\noindent where $x \neq 0,1$. Now,let $x\neq 0,1$ and is a $k$th power. Now, if $k$ satisfies the conclusion of Lemma \ref{elelem2}, it follows that $A$ is a sum of two $k$th powers. In the remaining cases, it follows that the first summand is a sum of two $k$th powers by Theorem \ref{intro_main_thm_1} and the second summand is a $k$th power, so that $A$ is a sum of three $k$th powers.\\

\non \tbf{Case $n \geq 3$ even and $a_{n-1} \neq 0$}:

\begin{equation}
A = \begin{bmatrix} G_{n-1} & e_n \\ 0 & b_{n-1} \end{bmatrix} + \begin{bmatrix} H_{n-1} & 0 \\ v^T & c_{n-1} \end{bmatrix}.
\end{equation}
Let $a_{n-1} = b_{n-1} + c_{n-1}$ and furthermore suppose that $b_{n-1}$ and $c_{n-1}$ are $k$th powers. Such a representation exists by Proposition \ref{abs_irr}. Given such a $b_{n-1}$ and a $c_{n-1}$ there exists $x \neq 0, c_{n-1}$ which is a $k$th power. Now, if $k$ satisfies the conclusion of Lemma \ref{elelem2}, then $A$ is a sum of two $k$th powers, otherwise it is a sum of three $k$th powers.
\end{proof}

\noindent Note that we proved a stronger statement than Theorem \ref{intro_main_thm_2}, namely if $k$ satisfies one of the conditions in Lemma \ref{elelem2} then every matrix in $M_2(\F_q)$ is a sum of \tit{two} $k$th powers, otherwise it is a sum of \tit{three} $k$th powers.

\section*{Acknowledgements}
\noindent It is a great pleasure to thank Michael Larsen for suggesting the question.

\bibliographystyle{elsarticle-num} 

\end{document}